\documentclass[12pt,twoside]{amsart}
\usepackage{amssymb}
\usepackage{graphicx}
\usepackage[margin=1in]{geometry}






%
%

%
%
\newcommand{\Z}{\mathbb{Z}}
\newcommand{\R}{\mathbb{R}}
\newcommand{\N}{\mathbb{N}}
\newcommand{\Q}{\mathbb{Q}}

\newcommand {\PP}{\mathbb{P}}
%
%

%
%

%
%



\newcommand{\cc}{\mathbb{C}}
\newcommand{\nn}{\mathbb{N}}
\newcommand{\zz}{\mathbb{Z}}
\newcommand{\beas}{\begin{eqnarray*}}
\newcommand{\eeas}{\end{eqnarray*}}
\newcommand{\bea}{\begin{eqnarray}}
\newcommand{\eea}{\end{eqnarray}}
\newcommand{\beq}{\begin{equation}}
\newcommand{\eeq}{\end{equation}}
\newcommand{\ben}{\begin{enumerate}}
\newcommand{\een}{\end{enumerate}}



%
%



\newtheorem{theorem}{Theorem}[section]
\newtheorem{lemma}[theorem]{Lemma}
\newtheorem{proposition}[theorem]{Proposition}

\theoremstyle{definition}
\newtheorem{remark}[theorem]{Remark}
\newtheorem{example}[theorem]{Example}

\usepackage{latexsym}
\usepackage{color,hyperref}
\definecolor{darkblue}{rgb}{0,0,0.6}
\hypersetup{colorlinks,breaklinks,linkcolor=darkblue,urlcolor=darkblue,anchorcolor=darkblue,citecolor=darkblue}

\title[Some asymptotic results on $q$-binomial coefficients]{Some asymptotic results on $q$-binomial coefficients}

\author[Richard P. Stanley]{Richard P. Stanley${}^{\ast }$} \address{Department of Mathematics\\ MIT\\ Cambridge, MA 02139-4307}
\email{rstan@math.mit.edu}

\author[Fabrizio Zanello]{Fabrizio Zanello${}^{\ast \ast}$} \address{Department of Mathematical  Sciences\\ Michigan
  Tech\\ Houghton, MI  49931-1295} 
\email{zanello@mtu.edu}


\thanks{2010 {\em Mathematics Subject Classification.} Primary: 05A16;
  Secondary: 05A17.\\\indent 
{\em Key words and phrases.} $q$-binomial coefficient; asymptotic
enumeration; integer partition; Eulerian number; Euler-Frobenius
number; Kostka number.\\\indent  
${}^*$This author's contribution is based upon work supported by the National
    Science Foundation under Grant No.~DMS-1068625.\\\indent
    ${}^*{}^*$This author is partially supported by a
Simons Foundation grant (\#274577).} 
 
\begin{document}

\begin{abstract} 
We look at the asymptotic behavior of the coefficients of the
$q$-binomial coefficients (or Gaussian polynomials)
$\binom{a+k}{k}_q$, when $k$ is fixed. We give a number of results in
this direction, some of which involve Eulerian polynomials and their
generalizations.
\end{abstract}

\maketitle

\section{Introduction}


The purpose of this note is to investigate the asymptotic behavior of
the coefficients of the $q$-binomial coefficient (or Gaussian
polynomial) $\binom{a+k}{k}_q$. While much of the previous work in
this area has focused on the case where both $a$ and $k$ get
arbitrarily large (see e.g. \cite{takacs}), in this paper we will be
concerned with asymptotic estimates for the coefficients of
$\binom{a+k}{k}_q$ when $k$ is fixed.

Besides the intrinsic relevance of studying the combinatorial,
analytic or algebraic properties of $q$-binomial coefficients, our
work is also motivated by a series of recent papers that have revived
the interest in analyzing the behavior of the coefficients of
$\binom{a+k}{k}_q$, as well as their applications to other
mathematical areas. See for instance \cite{PP}, where I. Pak and
G. Panova have first shown algebraically the strict unimodality of
$\binom{a+k}{k}_q$, as well as the subsequent combinatorial proofs of
the Pak-Panova result by the second author of this paper \cite{Za} and
by V. Dhand \cite{Dh}. See also another interesting recent work by Pak
and Panova \cite{PP2} (as well as their extensive bibliography), where
the coefficients of $\binom{a+k}{k}_q$ have been investigated in
relation to questions of representation theory concerning the growth
of Kronecker coefficients. Further, one of the results of this note,
Theorem~\ref{thm:dkx}, has also been motivated by, and finds a first
useful application in the study of the unimodality of partitions with
distinct parts that are contained inside certain Ferrers diagrams (see
our own paper \cite{SZ}).

For $m=\lfloor ak/2 \rfloor$ (the middle exponent of
$\binom{a+k}{k}_q$ when $k$ or $a$ are even, and the smaller of the
two middle exponents otherwise), define $g_{k,c}(a)$ to be the
coefficient of degree $m-c$ of $\binom{a+k}{k}_q$, and let
$f_{k,c}(a)= g_{k,c}(a)-g_{k,c+1}(a)$. Our first main result is a
description of the generating functions (in two variables, referring
to $a$ and $c$) of $g_{k,c}(a)$ and $f_{k,c}(a)$. In particular, it
follows from our result that both $g_{k,c}(a)$ and $f_{k,c}(a)$ are
quasipolynomials in $a$, for any given $k$ and $c$.

Our next result, Theorem~\ref{thm:calk}, is an asymptotic estimate of
the coefficient of degree $\lfloor \alpha a\rfloor-c$ of
$\binom{a+k}{k}_q$, when $a\to\infty$, for any given integer $c$,
positive integer $k$, and nonnegative real number $\alpha$. Quite
surprisingly, this result connects in a nice fashion to Eulerian
numbers and, more generally, to Euler-Frobenius numbers, as we will
discuss extensively after the proof of the theorem.

Finally, our last main result, Theorem~\ref{thm:dk}, presents an
asymptotic estimate of the difference between consecutive coefficients
of $\binom{a+k}{k}_q$, again for $k$ fixed.

We will wrap up this note with a brief remark, in order to highlight
an interesting connection of our last result with Kostka numbers and
to present some suggestions for further research.

\section{Some asymptotic properties of the coefficients of
  $\binom{a+k}{k}_q$}  
In this section, we study the asymptotic behavior of the coefficients of
$\binom{a+k}{k}_q$ for fixed $k$. Given $k\geq 1$, $c\geq 0$, and
$a\geq 0$, set $m=\lfloor ak/2 \rfloor$. Define
  \bea g_{k,c}(a) & = & [q^{m-c}]\binom{a+k}{k}_q,\nonumber \\
     f_{k,c}(a) & = & g_{k,c}(a)-g_{k,c+1}(a), \label{eq:fkc} \eea
where $[q^n]F(q)$ denotes the coefficient of $q^n$ in the polynomial
(or power series) $F(q)$. 
 
\begin{lemma}\label{lemma:dis}
Let $F(q)\in\cc[[q]]$, and $c,j,i\in \Z$ with $j>i\geq 0$. We have:
\ben\item[(a)]
  $$ \sum_{a\geq 0} [q^{aj-c}]q^{ai}F(q)x^a =
  \frac{1}{j-i}\left.\sum_{\zeta^{j-i}=1} (\zeta x)^c F(\zeta
  x)\right|_{x\to x^{1/(j-i)}} $$
\item[(b)] 
  $$ \sum_{a\geq 0}\sum_{c\geq 0}[q^{aj-c}]q^{ai}F(q)x^a t^c =
   \frac{1}{j-i}\left.\sum_{\zeta^{j-i}=1}\frac{F(\zeta x)}{1-\zeta xt}
   \right|_{x\to x^{1/(j-i)}}. $$
\een
\end{lemma}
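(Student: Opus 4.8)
The plan is to extract the relevant coefficients by the standard roots-of-unity filter. Write $G(q) = q^{ai} F(q)$, so that the coefficient $[q^{aj-c}]q^{ai}F(q)$ we want equals $[q^{a(j-i)-c}]F(q)$. Set $d = j-i > 0$. The key observation is that for an integer $n$, the quantity $[q^{n}]\bigl(q^{-c}F(q)\bigr)$ — interpreting this formally, i.e. the coefficient of $q^{n+c}$ in $F(q)$ — can be sieved out of $F$ by averaging over $d$-th roots of unity: for each fixed residue, $\frac1d\sum_{\zeta^d=1}\zeta^{-n}\,(\text{stuff})$ kills all terms whose exponent is not congruent to $n$ modulo $d$. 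Concretely, I would first verify the elementary identity
\[
\sum_{a\ge 0}\bigl([q^{ad}]H(q)\bigr)\,y^{a}
=\frac1d\sum_{\zeta^d=1}H(\zeta y^{1/d}),
\]
valid for any $H\in\cc[[q]]$, by expanding $H(q)=\sum_b h_b q^b$ on the right and using that $\sum_{\zeta^d=1}\zeta^{b}$ equals $d$ when $d\mid b$ and $0$ otherwise; only the terms with $b=ad$ survive, each contributing $h_{ad}y^{a}$. This is the heart of the matter and is really just bookkeeping.

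For part~(a), I would apply the displayed identity with $H(q) = q^{c}F(q)$ (equivalently, shift the exponent by $c$), so that $[q^{ad}]H(q) = [q^{ad-c}]F(q) = [q^{aj-c}]q^{ai}F(q)$. Substituting and then relabeling the variable via $x \to x^{1/d}$ — i.e. computing the generating function in a variable $x$ and writing the answer as a series in $x^{1/d}$, which is exactly what the notation $\big|_{x\to x^{1/(j-i)}}$ encodes — gives
\[
\sum_{a\ge 0}[q^{aj-c}]q^{ai}F(q)\,x^{a}
=\frac1{j-i}\sum_{\zeta^{j-i}=1}(\zeta x)^{c}F(\zeta x)\Big|_{x\to x^{1/(j-i)}},
\]
since $H(\zeta x) = (\zeta x)^{c}F(\zeta x)$. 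That is precisely the claimed formula.

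For part~(b), I would simply multiply the part~(a) identity by $t^{c}$ and sum over $c\ge 0$ before performing the substitution. On the left this produces the double sum $\sum_{a\ge0}\sum_{c\ge0}[q^{aj-c}]q^{ai}F(q)\,x^{a}t^{c}$. On the right, interchanging the (absolutely convergent, for $|t|$ small) sums over $c$ and over $\zeta$, the $c$-sum is the geometric series $\sum_{c\ge0}(\zeta x t)^{c} = \frac{1}{1-\zeta x t}$, which collapses $\sum_{c\ge0}(\zeta x)^{c}F(\zeta x)\,t^{c}$ to $\frac{F(\zeta x)}{1-\zeta x t}$; applying the substitution $x\to x^{1/(j-i)}$ then yields the stated formula. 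The only point requiring a word of care is the meaning of the operator $\big|_{x\to x^{1/(j-i)}}$ applied to a Laurent-type expression in $\zeta x$ — one should read the right-hand side as first being expanded as a formal power series in $x$ (all negative powers cancel after the root-of-unity average, since the left-hand coefficients are genuine integers) and then having $x$ replaced by $x^{1/(j-i)}$. No genuine obstacle arises; the main thing to get right is this formal-manipulation convention and the indexing in the roots-of-unity filter.
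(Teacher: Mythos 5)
Your proposal is correct and follows essentially the same route as the paper: both rest on the standard $d$th dissection (roots-of-unity filter) identity, applied to $H(q)=q^cF(q)$ for part (a), with part (b) obtained by summing the geometric series in $t$ over $c$. The only cosmetic difference is that the paper collapses the $c$-sum into $F(q)/(1-qt)$ before dissecting, while you dissect first and then sum over $c$; these are interchangeable.
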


\begin{proof}

For any $G(q)=\sum a_iq^i\in\cc[[q]]$ and $h\geq 1$, write
  $$ D^h G(q) = \sum a_{hi}x^{hi}, $$
the $h$th \emph{dissection} of $G(q)$.
It is an elementary and standard result (see e.g. \cite[Exercise
1.60]{St0}) that  
   $$ D^h G(q) = \frac 1h \sum_{\zeta^h=1}
       G(\zeta x). $$
(The sum is over all $h$ complex numbers $\zeta$ satisfying
$\zeta^h=1$.) Hence (a) follows.
 
 Part (b) is the generating function (in $t$) with respect to $c$ of
 the formula of part (a). We have: 
  \beas \sum_{a\geq 0}\sum_{c\geq
    0}[q^{aj-c}]q^{ai}F(q)x^at^c
  & = & \sum_{a\geq 0} \sum_{c\geq 0} [q^{a(j-i)}]q^cF(q)x^at^c\\
   & = & \sum_{a \geq 0} [q^{a(j-i)}]\frac{F(q)}{1-qt}x^a\\
   & = & \frac{1}{j-i}\left.\sum_{\zeta^{j-i}=1}\frac{F(\zeta x)}{1-\zeta
    xt}\right|_{x\to x^{1/{(j-i)}}}, \eeas
and the proof follows.
\end{proof}

From Lemma~\ref{lemma:dis}, it is easy to describe the form of the
generating functions for $g_{k,c}(a)$ and $f_{k,c}(a)$, when $k$ and
$c$ are fixed. For this purpose, define a \emph{quasipolynomial} to be
a function $h\colon \nn\to\cc$ (where $\nn=\{0,1,2,\dots\}$) of the
form
   $$ h(n)=c_d(n)n^d +c_{d-1}(n)n^{d-1}+\cdots+ c_0(n), $$ 
where each $c_i(n)$ is a periodic function of $n$. If $c_d(n)\neq 0$
then we call $d$ the \emph{degree} of 
$h$. For more information on quasipolynomials, see for instance
\cite[{\S}4.4]{St0}.

Write
  \beas F_k(x,t) & = & \sum_{a\geq 0}\sum_{c\geq 0} f_{k,c}(a)x^at^c\\
 G_k(x,t) & = & \sum_{a\geq 0}\sum_{c\geq 0} g_{k,c}(a)x^at^c. \eeas

\begin{theorem} \label{thm:dkx}
Fix $k\geq 1$ and set $j=\lfloor k/2\rfloor$. If we denote both $F_k$
and $G_k$ by $H_k$, then 
  $$ H_k(x,t) = \left\{ \begin{array}{ll} \displaystyle \frac{N_k(x,t)}
     {D_k(x)(1-tx)(1-t^2x)(1-t^3x)\cdots(1-t^jx)}, & k\ \mathrm{even}\\[1em]
      \displaystyle \frac{N_k(x,t)}
     {D_k(x)(1-tx^2)(1-t^3x^2)(1-t^5x^2)\cdots(1-t^kx^2)}, 
      & k\ \mathrm{odd}. \end{array} \right. $$
where $N_k(x,t)\in\zz[x,t]$ and $D_k(x)$ is a product of cyclotomic
polynomials. In particular, for fixed $k$ and $c$ we have that 
$g_{k,c}(a)$ and $f_{k,c}(a)$ are quasipolynomials.  
\end{theorem}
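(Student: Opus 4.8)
The plan is to reduce everything to Lemma~\ref{lemma:dis}(b) by separating the dependence of $\binom{a+k}{k}_q$ on $a$ from a fixed power series. Expanding the product $\prod_{i=1}^{k}(1-q^{a+i})$ gives the identity
\[
\binom{a+k}{k}_q=\Bigl(\sum_{S\subseteq\{1,\dots,k\}}(-1)^{|S|}q^{\,a|S|+\sigma(S)}\Bigr)F(q),\qquad F(q)=\prod_{i=1}^{k}\frac{1}{1-q^{i}},
\]
valid in $\cc[[q]]$, where $\sigma(S)=\sum_{i\in S}i$. Taking the coefficient of $q^{m-c}$ (with $m=\lfloor ak/2\rfloor$) expresses $g_{k,c}(a)$ as a signed sum over $S$ of coefficients of the form $[q^{\,m-c-a|S|-\sigma(S)}]F(q)$. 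Since $F$ is a power series, a term whose $q$-exponent is negative vanishes, and a brief case analysis shows that for \emph{every} $a\ge 0$ only the subsets with $|S|\le j-1$ (if $k=2j$) or $|S|\le j$ (if $k=2j+1$) survive, where $j=\lfloor k/2\rfloor$ as in the statement.

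I would then bring each surviving term into the shape of Lemma~\ref{lemma:dis}(b). If $k=2j$, then $m=aj$ and the surviving exponent is $a(j-|S|)-c-\sigma(S)$ with $j-|S|\ge 1$; rewriting $[q^{\,N-\sigma(S)}]F(q)=[q^{\,N}]\bigl(q^{\sigma(S)}F(q)\bigr)$ puts this exactly in the form of part (b) (with the lemma's ``$j$'' equal to $j-|S|$, its ``$i$'' equal to $0$, and its ``$F$'' the power series $q^{\sigma(S)}F(q)$). If $k=2j+1$, then $m=aj+\lfloor a/2\rfloor$, so I would first split the outer sum over $a$ into its even part ($a=2b$) and its odd part ($a=2b+1$); in each case the exponent becomes $b\,h_{|S|}+(\text{a fixed shift})-c-\sigma(S)$ with $h_s:=2(j-s)+1\ge 1$, and part (b) now applies in the variable $x^{2}$ (with ``$j$''~$=h_{|S|}$). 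The only nuisance is that in the odd/odd-$a$ branch the shift $(j-|S|)-\sigma(S)$ may be negative; I would absorb such a shift by factoring out a power $t^{-\rho}$ and writing $\sum_{c\ge 0}=\sum_{0\le c<\rho}+\sum_{c\ge\rho}$, the finite part contributing only a polynomial correction assembled from ordinary dissections of $F$ (which have cyclotomic denominators).

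Carrying this out, each summand becomes, after the substitution $x\mapsto x^{1/h}$ of Lemma~\ref{lemma:dis}(b), a rational function of $x$ and $t$ whose denominator divides $(1-t^{\,j-|S|}x)$ (even case) or $(1-t^{\,h_{|S|}}x^{2})$ (odd case) times a product of cyclotomic polynomials in $x$ — and, in the odd case, possibly a power of $t$; here one uses the elementary factorization $\prod_{\zeta^{h}=1}(1-\zeta^{i}x^{i})=(1-x^{\lcm(i,h)})^{\gcd(i,h)}$ to see that dissecting $F$ keeps the denominator cyclotomic. Putting all summands over a common denominator, the factors $1-t^{\,j-|S|}x$ run exactly through $(1-tx),(1-t^{2}x),\dots,(1-t^{j}x)$ as $|S|$ ranges over $\{0,\dots,j-1\}$, while the factors $1-t^{\,h_{|S|}}x^{2}$ run exactly through $(1-tx^{2}),(1-t^{3}x^{2}),\dots,(1-t^{k}x^{2})$ as $|S|$ ranges over $\{0,\dots,j\}$; the cyclotomic parts combine into a single $D_k(x)$. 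This yields the asserted form of $G_k$, with numerator a priori only a power series; but the right-hand side is visibly a rational function, $G_k\in\zz[[x,t]]$, and the denominator has unit constant term, so the numerator is forced to be a polynomial in $\zz[x,t]$ (in particular the spurious $t^{-\rho}$ must cancel). The case of $F_k$ is identical, or can be deduced from $F_k(x,t)=\bigl((t-1)G_k(x,t)+G_k(x,0)\bigr)/t$, which preserves the shape of the denominator. Finally, for fixed $k$ and $c$, $\sum_{a}g_{k,c}(a)x^{a}=[t^{c}]H_k(x,t)$ is — since $N_k$ has bounded degree in $t$ — a rational function of $x$ whose denominator divides $D_k(x)$, a product of cyclotomic polynomials; hence $g_{k,c}(a)$, and likewise $f_{k,c}(a)$, is a quasipolynomial in $a$ by the standard theory of such generating functions \cite[{\S}4.4]{St0}.

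The step I expect to be the main obstacle is the bookkeeping in the odd case: simultaneously tracking the even/odd split of $a$ and the possibly negative shifts $(j-|S|)-\sigma(S)$, and verifying that after clearing denominators one recovers \emph{precisely} the product $(1-tx^{2})(1-t^{3}x^{2})\cdots(1-t^{k}x^{2})$ with no residual power of $t$ — which, on the nose, is only guaranteed a posteriori by the fact that $H_k$ is an honest power series.
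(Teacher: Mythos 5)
Your proposal is correct and follows essentially the same route as the paper: expand $\prod_{i=1}^k(1-q^{a+i})$ over subsets of $[k]$, apply Lemma~\ref{lemma:dis} to each surviving term, and read off the cyclotomic factors from $F(\zeta x)$ and the factors $1-t^{j-|S|}x$ (resp.\ $1-t^{h}x^2$ after splitting $a$ by parity) from the $1-\zeta xt$ denominators. You are in fact more explicit than the paper about the fiddly points (which subsets survive, the exponent shifts in the odd case, and why the numerator ends up in $\zz[x,t]$), all of which the paper dismisses as "analogous."
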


\begin{proof}
\emph{Case 1:} $k=2j$. We have $m=\lfloor ak/2 \rfloor=aj$ . Write
  \beq (1-q^{a+1})(1-q^{a+2})\cdots (1-q^{a+k})=\sum_{i=0}^k
    (-1)^i P_i(q)q^{ai}, \label{eq:piq} \eeq 
where $P_i(q)$ is a polynomial in $q$ independent of
$a$. Specifically, we have
  \beq P_i(q)=\sum_{\substack{S\subseteq [k]\\ \#S=i}}
    q^{\sum_{s\in S}s}. \label{eq:piq2} \eeq
Writing $[k]!=(1-q)(1-q^2)\cdots(1-q^k)$, we get
  \beas G_k(x,t) & = & \sum_{a\geq 0}\sum_{c\geq
    0}[q^{m-c}]\binom{a+k}{k}_q x^a t^c\\ & = &
   \sum_{a\geq 0}\sum_{c\geq 0}[q^{aj-c}]\frac{1}{[k]!}
     \sum_{i=0}^k (-1)^iP_i(q)q^{ai}x^at^c\\ & = &
   \sum_{i=0}^k\sum_{a\geq 0}\sum_{c\geq 0}[q^{aj-c}]
      \frac{1}{[k]!} (-1)^iP_i(q)q^{ai}x^at^c. \eeas      

The proof now follows from Lemma~\ref{lemma:dis}(a). 
Note in particular that the expression $F(\zeta x)$ in
Lemma~\ref{lemma:dis} will produce cyclotomic polynomials in the
denominator of $G_k(x,t)$, while the denominator $1-\zeta xt$ in part
(b) will lead to the factor $1-t^{j-i}x$ in the denominator of
$G_k(x,t)$. The proof for $F_k(x,t)$ is completely analogous.

\emph{Case 2:} $k=2j+1$. The proof is analogous to Case~1. Now we have
to look at $a=2b$ and $a=2b+1$ separately. When $a=2b$ we get that the
part of $G_k(x,t)$ with even exponent of $x$ is 
  $G_k(x,t) = \sum_{a\geq 0}\sum_{c\geq
  0}[q^{bk-c}]\binom{2b+k}{k}x^{2b}t^c$.
When we apply Lemma~\ref{lemma:dis}, the denominator term 
becomes $1-\zeta x^2t$, where $\zeta^{j-i}=1$ and $j-i$ is odd. This
produces a factor $1-t^{j-i}x^2$ (where $j-i$ is odd) in the
denominator of $G_k(x,t)$. Exactly the same reasoning applies to
$a=2b+1$, so the proof follows.
\end{proof}

\begin{example}
Write $\Phi_m(x)$ for the $m$th cyclotomic polynomial normalized to
have constant term 1. Hence $\Phi_1(x)=1-x$, $\Phi_2(x)=1+x$,
$\Phi_3(x)=1+x+x^2$, etc.. One can compute the following:
 \bea
 F_3(x,t) & = & \frac{1+tx+tx^3+t^3x^4}
   {(1-x)(1+x)(1+x^2)(1-tx^2)(1-t^3x^2)} \nonumber\\
 G_3(x,t ) & = & \frac{N_3(x,t)}{(1-x)^2(1-x^4)(1-tx^2)(1-t^3x^2)}
    \nonumber \\
 F_4(x,t) & = & \frac{1-tx+t^2x^2}{(1-x^2)(1-x^3)(1-tx)(1-t^2x)}
   \label{eq:f4xt} \\
 G_4(x,t) & = & \frac{1-x+(1+t)x^2-(t+t^2)x^3}
   {(1-x)^2(1-x^2)(1-x^3)(1-tx)(1-t^2x)} \nonumber \\
 F_5(x,0) & = & \frac{1-x^5-x^6+x^7+x^{12}} 
   {\Phi_1^3\Phi_2^3\Phi_3\Phi_4^2\Phi_6\Phi_8}   \nonumber \\
 G_5(x,0) & = &
 \frac{B_5(x)}{(1-x)^2(1-x^4)(1-x^6)(1-x^8)} \nonumber
  \\ 
 F_6(x,t) & = & \frac{M_6(x,t)}{\Phi_1^4\,\Phi_2^2\,\Phi_3\,\Phi_4\,
   \Phi_5\,(1-tx)(1-t^2x)(1-t^3x)} \nonumber \\
 G_6(x,t) & = &
 \frac{N_6(x,t)}{\Phi_1^6\,\Phi_2^3\,\Phi_3\,\Phi_4\,\Phi_5\,
     (1-tx)(1-t^2x)(1-t^3x)}, \nonumber 
 \eea
where
 $$ N_3(x,t) = 1-(1-t)x+(1-t+t^2)x^2 +(t-t^2)x^3 -
    (t-t^2)x^4-(t^2+t^3)x^5 $$

 $$ B_5(x) =
    1-x+2x^2+x^3+2x^4+3x^5+x^6+5x^7+x^8+3x^9+2x^{10}+x^{11}+ 
       2x^{12} $$  
  $$ \qquad -x^{13}+x^{14}
    +3x^{10}+x^{12}-x^{13}+2x^{14}-x^{15}+x^{17}-2x^{18}+x^{19} $$  

 $$ M_6(x,t) = 1+(1-t-t^2)x-(t-t^3-t^4)x^2-(1-t-t^2-t^3-t^4+t^5)x^3 $$
 $$
-(1-2t-t^2+t^5)x^4-(1-2t-t^2+t^3+t^4)x^5+(t+t^2-t^3-2t^4+t^5)x^6 $$
  $$+(1-t^3-2t^4+t^5)x^7+(1-t-t^2-t^3-t^4+t^5)x^8-(t+t^2-t^4)x^9 $$
   $$ +(t^3+t^4-t^5)x^{10}-t^5x^{11} $$

 $$
N_6(x,t)=1+(1+2t+t^2)x^2+(3+2t-t^2-2t^3-t^4)x^3+(4-2t^2-3t^3-t^4+t^5)x^4\\ $$
  $$ +(4-3t^2-4t^3-t^4+2t^5)x^5+(4-t-4t^2-4t^3-t^4+3t^5)x^6 $$
  $$+(3-t-5t^2-4t^3+3t^5)x^7+(1-t-4t^2-3t^3+t^4+4t^5)x^8 $$
  $$ -(2t^2+t^3-t^4-3t^5)x^9+(1-t^2-t^3+3t^5)x^{10} $$
  $$ -(t+t^2+t^3-t^4-2t^5)x^{11}+(t^3+t^4+t^5)x^{12}. $$
  
The denominator of $F_8(x,t)$ is given by
  $$ \Phi_1^6\,\Phi_2^3\,\Phi_3^2\,\Phi_4\,\Phi_5
    \,\Phi_7\,(1-tx)(1-t^2x)(1-t^3x)(1-t^4x), $$
and that of $G_8(x,t)$ by
  $$ \Phi_1^8\,\Phi_2^3\,\Phi_3^2\,\Phi_4\,\Phi_5\,\Phi_7\,
    (1-tx)(1-t^2x)(1-t^3x)(1-t^4x). $$
    
Let us also note that 
  \beas F_8(x,0) & = & \sum_{a\geq
    0}([q^{4a}]-[q^{4a-1}])\binom{a+8}{8}_q x^a\\ & = &
   \frac{1+x-x^3-x^4+x^6+x^7+x^8+x^9+x^{10}-x^{12}-x^{13}+
   x^{15}+x^{16}}{(1+x)(1-x^2)(1-x^3)^2(1-x^4)(1-x^5) 
       (1-x^7)}\\[.2em] & \hspace{-3.5em} = & \hspace{-2em} 
   1+x^2+x^3+2x^4+2x^5+4x^6+4x^7+7x^8+8x^9+12x^{10}+\cdots. 
    \eeas
    
This generating function appears in a paper \cite[p.~847]{igusa} of
Igusa, stated in terms of the representation theory of
SL$(n,\cc)$. Igusa also computes $F_2(x,0)$, $F_4(x,0)$, and
$F_6(x,0)$.
\end{example}

From the techniques for computing $F_k(x,t)$ and $G_k(x,t)$, we can
determine asymptotic properties of some of the coefficients of
$\binom{a+k}{k}_q$, for $k$ fixed. The coefficients of
$\binom{a+k}{k}_q$ have been considered for $a,k\to \infty$ by
Tak\'acs \cite{takacs} and others, but the computation for $k$ fixed
seems to be new. As usual, we define $f(x)=O(g(x))$ for $x\to x_0\le \infty$, if $\vert f(x)\vert \le C\cdot \vert g(x)\vert$ for some constant $C>0$, when $x$ approaches $x_0$.

\begin{theorem} \label{thm:calk}
Fix $\alpha\geq 0$ ($\alpha\in\R$), $c\in \Z$, and $k$ a positive integer. Then
  $$ [q^{\lfloor \alpha a\rfloor-c}]\binom{a+k}{k}_q = 
    \frac{1}{(k-1)!\,k!}C(\alpha, k)a^{k-1}+O(a^{k-2}) $$
for $a\to \infty$, where
   $$ C(\alpha,k) = \sum_{i=0}^{\lfloor\alpha\rfloor}
      (-1)^i\binom ki(\alpha-i)^{k-1}. $$
\end{theorem}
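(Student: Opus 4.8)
The plan is to start from the standard product formula
$$\binom{a+k}{k}_q = \frac{(1-q^{a+1})(1-q^{a+2})\cdots(1-q^{a+k})}{(1-q)(1-q^2)\cdots(1-q^k)},$$
expand the numerator exactly as in \eqref{eq:piq}--\eqref{eq:piq2} as $\sum_{i=0}^k (-1)^i P_i(q)q^{ai}$, and then extract the coefficient of $q^{\lfloor\alpha a\rfloor-c}$. Dividing by $[k]!=(1-q)\cdots(1-q^k)$ means we are looking at $[q^{\lfloor\alpha a\rfloor-c}]$ applied to $q^{ai}\,P_i(q)/[k]!$ summed over $i$. Since $1/[k]! = \prod_{\ell=1}^k (1-q^\ell)^{-1}$ has the well-known expansion whose coefficient of $q^N$ is the number of partitions of $N$ into parts of size at most $k$, and this count is a quasipolynomial in $N$ of degree exactly $k-1$ with leading term $N^{k-1}/((k-1)!\,k!)$ (the classical result, e.g.\ \cite[\S4.4]{St0}), the contribution of the $i$-th term to $[q^{\lfloor\alpha a\rfloor-c}]$ is, up to the fixed polynomial factor $P_i(q)$ and the shift by $\lfloor\alpha a\rfloor - c - ai$, again a quasipolynomial of degree $\le k-1$ in that shifted argument.

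The key computational step is then to track the leading ($a^{k-1}$) term. Write $N_i = \lfloor\alpha a\rfloor - c - ai$; this is nonnegative (for large $a$) precisely when $i\le\lfloor\alpha\rfloor$, and negative otherwise, so only indices $i=0,1,\dots,\lfloor\alpha\rfloor$ contribute. For each such $i$, one uses that $P_i(q)$ evaluated appropriately (its value at $q=1$ is $\binom{k}{i}$, and more carefully one needs the leading-order behavior of $[q^{N_i - \deg(\text{monomial of }P_i)}]\,1/[k]!$, all of which share the same leading term $N_i^{k-1}/((k-1)!\,k!)$ since lower-order shifts only affect lower-order terms). Summing $P_i(1)=\binom{k}{i}$ copies of $(-1)^i N_i^{k-1}/((k-1)!\,k!)$ and using $N_i = a(\alpha - i) + O(1)$, so $N_i^{k-1} = (\alpha-i)^{k-1}a^{k-1} + O(a^{k-2})$, yields exactly
$$[q^{\lfloor\alpha a\rfloor - c}]\binom{a+k}{k}_q = \frac{a^{k-1}}{(k-1)!\,k!}\sum_{i=0}^{\lfloor\alpha\rfloor}(-1)^i\binom{k}{i}(\alpha-i)^{k-1} + O(a^{k-2}),$$
which is the claimed formula with $C(\alpha,k)$ as stated.

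The main obstacle is the bookkeeping needed to be sure that all the "lower-order" perturbations really are lower order: the partition-counting quasipolynomial $p_{\le k}(N)$ has period $\mathrm{lcm}(1,\dots,k)$, so its value at $N_i$ versus $N_i$ shifted by a bounded amount (coming from the monomials of $P_i$, from $c$, and from the floor function $\lfloor\alpha a\rfloor$ versus $\alpha a$) can differ in all coefficients \emph{except} the leading one, which is the genuinely constant $1/((k-1)!\,k!)$. One must argue that each such shift changes $[q^{N_i-s}]\,1/[k]!$ by $O(N_i^{k-2}) = O(a^{k-2})$, uniformly over the finitely many relevant shifts $s$, and that summing over the bounded number of terms of $P_i$ and over the $\lfloor\alpha\rfloor+1$ values of $i$ preserves this error bound. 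Once that uniformity is in hand, the result follows; it is worth remarking afterward (as the authors evidently intend) that $C(\alpha,k)$ for integer $\alpha$ is, up to normalization, an Eulerian-type number, and that $C(k/2,k)$ recovers the known leading asymptotics of the central coefficient.
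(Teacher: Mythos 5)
Your argument is correct, and it reaches the theorem by a genuinely different route from the paper. The paper works with generating functions in $x$: it first assumes $\alpha=u/v$ rational, splits $a$ into residue classes mod $v$, applies the root-of-unity dissection of Lemma~\ref{lemma:dis}, isolates the order-$k$ pole at $x=1$ (computing its leading Laurent coefficient $s^{k-1}/k!$), and finally passes to irrational $\alpha$ by a continuity argument resting on the unimodality and symmetry of $\binom{a+k}{k}_q$. You instead extract coefficients directly, using the single classical fact that $[q^N]\,1/[k]!$ is the restricted partition function $p_{\le k}(N)$, a quasipolynomial of degree $k-1$ whose \emph{leading} coefficient is the genuine constant $1/((k-1)!\,k!)$; every bounded shift of the argument (from the monomials of $P_i$, from $c$, and from the floor) then perturbs only the $O(N^{k-2})$ part, and $N_i=a(\alpha-i)+O(1)$ gives the stated sum over $0\le i\le\lfloor\alpha\rfloor$. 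The identification of which $i$ contribute is the same in both arguments. Your version is more elementary and, notably, treats all real $\alpha\ge 0$ uniformly with no separate continuity step; the one point you flag as an ``obstacle'' (uniformity of the $O(a^{k-2})$ error over the finitely many shifts) is in fact immediate, since a quasipolynomial of degree at most $k-2$ is $O(N^{k-2})$ with a single constant and $(N-s)^{k-1}-N^{k-1}=O(N^{k-2})$ for bounded $s$. What the paper's heavier machinery buys is reuse: the dissection lemma and the Laurent expansion at the poles are exactly what is needed for Theorem~\ref{thm:dkx} and for the second-order analysis in Theorem~\ref{thm:dk}, where the next Laurent coefficient (your periodic subleading coefficient of $p_{\le k}$, which is where the roots of unity would resurface in your setup) must actually be computed.
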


\begin{proof}
First assume that $\alpha$ is rational, say $\alpha=u/v$. Fix $0\leq
r<v$ and consider only those $a$ of the form
$a=vb+r$. Set $d=\lfloor ur/v\rfloor$.  Thus
  \beq \left[ q^{\lfloor ua/v\rfloor-c}\right] \binom{a+k}{k}_q =
    [q^{ub+d-c}]\frac{(1-q^{vb+r+k})(1-q^{vb+r+k-1})\cdots
     (1-q^{vb+r+1})}{(1-q^k)(1-q^{k-1})\cdots (1-q)}.
   \label{eq:uav} \eeq
   
Write
  \beas G_{\alpha,k,r}(x) & = &\sum_{\substack{a\geq 0\\ a\equiv
      r\,(\mathrm{mod}\,v)}} \left[ q^{\lfloor 
    ua/v\rfloor-c}\right] \binom{a+k}{k}_q x^a\\
    & = & \sum_{b\geq 0}[q^{ub +d-c}]\binom{vb+r+k}{k}_q
      x^{vb+r}. \eeas   
      
We now apply equation~\eqref{eq:uav}, expand the numerator and apply
Lemma~\ref{lemma:dis}(a).   
We obtain a linear combination of expressions like
  \beq \frac 1s\sum_{\zeta^s=1}\left.\frac{(\zeta x)^e}{(1-\zeta
    x)(1-\zeta^2 x^2)\cdots (1-\zeta^k x^k)}\right|_{x\to
    x^{1/s}} = \left.G(x)\right|_{x\to x^{1/s}}, \label{eq:sumzeta}
  \eeq 
say.  Let $\zeta_s=e^{2\pi i/s}$, a primitive $s$th root of unity. The
order to which 1 is a pole in equation~\eqref{eq:sumzeta} is thus at
most the order to which $\zeta_s$ is a pole of $G(x)$.
Now any term indexed by
$\zeta\neq 1$ has $\zeta_s$ as a pole of $G(x)$ of order less than $k$,
while the term indexed by $\zeta=1$ has a pole of order at most $k$ at
$x=1$. Hence if in the end we have a pole of order $k$, then it 
suffices to retain only the term in \eqref{eq:sumzeta} indexed by
$\zeta=1$. Therefore if, for any integer $e$,
 $$  \frac 1s\sum_{\zeta^s=1}\left.\frac{(\zeta x)^e}{(1-\zeta
    x)(1-\zeta^2 x^2)\cdots (1-\zeta^k x^k)}\right|_{x\to
    x^{1/s}} = \frac{c_0}{(1-x)^k}+
    O\left(\frac{1}{(1-x)^{k-1}}\right) $$ 
for $x\to 1$, then 
  $$ c_0  =  \lim_{x\to 1}(1-x^s)^k \frac{x^e}{s(1-x)(1-x^2)\cdots
       (1-x^k)} =  \frac{s^{k-1}}{k!}. $$
\indent Write
  $$ (1-q^{vb+r+k})(1-q^{vb+r+k-1})\cdots (1-q^{vb+r+1}) =
    \frac{\sum_{i=0}^k(-1)^i Q_i(q)q^{bvi}}{[k]!}, $$
where $Q_i(q)$ is a polynomial independent of $b$ and $v$, so 
$Q_i(1)=\binom ki$. Note that $u-vi\geq 0$ if and only if
$i\leq\lfloor \alpha\rfloor$. It follows that 
  \beas G_{\alpha,k,r}(x) & = & \sum_{b\geq 0}\left[ q^{ub+d-c}\right]
  \frac{\sum_{i=0}^k (-1)^i 
    Q_i(q)q^{bvi}}{[k]!}x^{bv+r}\\ & = &
   \sum_{b\geq 0}[q^{(u-vi)b}]\frac{\sum_{i=0}^k (-1)^i 
    Q_i(q)q^{c-d}}{[k]!}x^{bv+r}\\
   & = & \left(\frac{1}{k!}
    \sum_{i=0}^{\lfloor\alpha\rfloor}(-1)^i\binom ki(u-vi)^{k-1} \right)
    \frac{x^r}{(1-x^v)^k}+O\left(\frac{1}{(1-x)^{k-1}}\right)\\ & = & 
    \left(\frac{1}{k!}
    \sum_{i=0}^{\lfloor\alpha\rfloor}(-1)^i\binom ki(u-vi)^{k-1} \right)
    \frac{1}{v^k(1-x)^k}+O\left(\frac{1}{(1-x)^{k-1}}\right).
  \eeas
  
Now sum over $0\leq r<v$. Since we have $v$ terms in the sum, we pick up
an extra factor of $v$ on the right, giving
  \beas  \sum_{a\geq 0} [q^{\lfloor ua/v\rfloor}]\binom{a+k}{k}_q x^a 
   & = &  \left(\frac{1}{k!}
    \sum_{i=0}^{\lfloor\alpha\rfloor}(-1)^i\binom ki(u-vi)^{k-1} \right)
    \frac{1}{v^{k-1}(1-x)^k}+O\left(\frac{1}{(1-x)^{k-1}}\right)\\
    & = & \left(\frac{1}{k!} \sum_{i=0}^{\lfloor\alpha\rfloor}
    (-1)^i\binom ki(\alpha-i)^{k-1} \right) 
    \frac{1}{(1-x)^k}+O\left(\frac{1}{(1-x)^{k-1}}\right). \eeas
    
Now
  \beas [x^a]\frac{1}{(1-x)^k} & = & \binom{k+a-1}{k-1} =
  \frac{a^{k-1}}{(k-1)!} + O(a^{k-2}), \eeas 
completing the proof for $\alpha$ rational.

The proof for general $\alpha$ now follows by a simple continuity
argument, using the unimodality and symmetry of the coefficients of
$\binom{a+k}{k}_q$.   
\end{proof}

The numbers $C(\alpha,k)$ have appeared before and are known as 
\emph{Euler-Frobenius numbers}, 
denoted $A_{k-1,\lfloor \alpha\rfloor,\alpha-\lfloor
  \alpha\rfloor}$. For a discussion of the history and properties of
  these numbers, see Janson \cite{janson}. 
Some special cases are of interest. 
Recall that the \emph{Eulerian number} $A(d,i)$ can
be defined as the number of permutations $w$ of $1,2,\dots,d$ with
$i-1$ descents (e.g.\ \cite[{\S}1.4]{St0}). Similarly the
\emph{MacMahon number} $B(d,i)$ can be defined as the number of
elements in the hyperoctahedral group $B_n$ according to the number of
type $B$ descents. For further information, see \cite{oeis}. Standard
results about these numbers imply that for integers $1\leq j<k$,
  \beas C(j,k) & = & A(k-1,j),\\
        2^{k-1}C((2j-1)/2,k) & = & B(k-1,j). \eeas

There is an alternative way to show the above formula for
$C(\alpha,k)$ (done with assistance from Fu Liu). 
Write $\beta=\lfloor \alpha\rfloor$. Since the coefficient of $q^{a\beta}$
in $\binom{a+k}{k}_q$ is the number of partitions of $a\beta$ into at most $a$ parts of length at most $k$, equivalently, it is equal to the number of solutions
$(m_1,\dots,m_k)$ in nonnegative integers to
   \beas m_1+2 m_2+\cdots +k m_k & = & a\beta,\\
       m_1+\cdots+m_k & \leq & a. \eeas 
       
Set $x_i=m_i/a$ and let $a\to\infty$. Standard arguments (see e.g.,
\cite[Proposition 4.6.13]{St0}) show that $C(\alpha,k)$ is the
$(k-1)$-dimensional relative volume (as defined in \cite[p.~497]{St0})
of the convex polytope: 
  \beas  x_i & \geq & 0,\ \ 1\leq i\leq k,\\
    x_1 + 2x_2 +\cdots +kx_k & = & \beta,\\
    x_1+x_2+\cdots+x_k & \leq & 1. \eeas
    
Set $y_i=x_i+x_{i+1}+\cdots+x_k$. The matrix of this linear
transformation has determinant 1, so it preserves the relative
volume. We get the new polytope $\mathcal{P}_k$ defined 
by
    $$ y_1 + y_2 + \cdots + y_k = \beta,$$
\vspace{-1.5em}
     $$ 0\leq y_1\leq y_2\leq\cdots\leq y_k\leq 1. $$
     
By symmetry, the relative volume of $\mathcal{P}_k$
is $1/k!$ times the relative volume of the polytope
    $$ y_1 + y_2 +\cdots +y_k = \beta, $$
\vspace{-1.5em}
    $$ 0\leq y_i\leq 1,\ \ 1\leq i\leq k. $$
    
This polytope is a cube cross-section, whose relative volume is
computed e.g.\ in \cite[Theorem 2.1]{janson}, completing the proof.

When $\alpha\in\Q$,  $C(\alpha,k)$ is related to the
Eulerian polynomial $A_{k-1}(x)$ via the following result.

\begin{proposition} \label{prop:geneul}
Let $v\in\PP$. Then
  \beq v^{k-1}\sum_{u\geq 0}C(u/v,k)x^u = (1+x+x^2+\cdots+x^{v-1})^k
      A_{k-1}(x). \label{eq:cucgf} \eeq
\end{proposition}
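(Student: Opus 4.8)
The plan is to compute the generating function $\sum_{u\ge 0} C(u/v,k)x^u$ directly from the definition
$C(\alpha,k)=\sum_{i=0}^{\lfloor\alpha\rfloor}(-1)^i\binom ki(\alpha-i)^{k-1}$
and recognize the answer. First I would substitute $\alpha=u/v$ and interchange the order of summation:
writing the condition $i\le\lfloor u/v\rfloor$ as $u\ge vi$, we get
\[
  \sum_{u\ge 0}C(u/v,k)x^u
  = \sum_{i=0}^{k}(-1)^i\binom ki\,\frac{1}{v^{k-1}}\sum_{u\ge vi}(u-vi)^{k-1}x^u
  = \frac{1}{v^{k-1}}\left(\sum_{i=0}^k(-1)^i\binom ki x^{vi}\right)\!\left(\sum_{m\ge 0}m^{k-1}x^m\right),
\]
after reindexing $m=u-vi$ in the inner sum and noting the $i$-sum may be extended to all of $\{0,\dots,k\}$ since $(u-vi)^{k-1}$ contributes nothing for $u<vi$ once we only keep $m\ge 0$. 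The first factor is just $(1-x^v)^k$, so the whole thing equals $v^{-(k-1)}(1-x^v)^k\sum_{m\ge 0}m^{k-1}x^m$.

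Next I would bring in the standard Worpitzky-type identity for the Eulerian polynomial, namely
$\sum_{m\ge 0}m^{k-1}x^m = A_{k-1}(x)/(1-x)^k$
(see e.g.\ \cite[\S1.4]{St0}, where $A_{k-1}(x)=\sum_j A(k-1,j)x^j$). Substituting this in gives
\[
  v^{k-1}\sum_{u\ge 0}C(u/v,k)x^u = \frac{(1-x^v)^k}{(1-x)^k}A_{k-1}(x) = (1+x+\cdots+x^{v-1})^k A_{k-1}(x),
\]
since $1-x^v=(1-x)(1+x+\cdots+x^{v-1})$, which is exactly \eqref{eq:cucgf}. I should double-check the edge case $k=1$, where $A_0(x)=1$ and $C(\alpha,1)=1$ for all $\alpha\ge 0$, so the left side is $\sum_{u\ge 0}x^u=1/(1-x)$ and the right side is $(1+\cdots+x^{v-1})\cdot 1 / ?$ — actually with $v^{k-1}=1$ both sides equal $(1+\cdots+x^{v-1})$ only if we read the identity as formal power series after clearing, so I would be careful to state everything as an identity in $\Z[[x]]$ rather than dividing.

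There is no real obstacle here; the only point requiring a little care is the legitimacy of extending the inner $i$-sum to all $i\in\{0,\dots,k\}$ and reindexing — this is fine because for fixed $u$ only the terms with $vi\le u$ are nonzero in $\sum_m m^{k-1}x^m$ after the shift, matching the truncation $i\le\lfloor u/v\rfloor$ in the definition of $C(u/v,k)$. The other thing to be slightly careful about is which normalization of the Eulerian polynomial is intended (degree $k-1$ vs.\ $k-2$, and whether the index starts at $0$ or $1$); I would fix the convention $A_{k-1}(x)=\sum_{j=1}^{k-1}A(k-1,j)x^{j}$ consistent with the earlier identity $C(j,k)=A(k-1,j)$ stated in the text, and then the power-series identity $\sum_{m\ge 1}m^{k-1}x^m=A_{k-1}(x)/(1-x)^k$ holds on the nose.
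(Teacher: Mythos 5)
Your proof is correct and follows essentially the same route as the paper: both hinge on the identity $\sum_{n\geq 0}n^{k-1}x^n=A_{k-1}(x)/(1-x)^k$ together with the expansion $(1-x^v)^k=\sum_i(-1)^i\binom ki x^{vi}$, the only cosmetic difference being that you carry out the ``routine'' coefficient comparison as an explicit product of generating functions.
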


\begin{proof}
We have
  \beq v^{k-1}\sum_{u\geq 0}C(u/v,k)x^u = \sum_{u\geq 0}
      \sum_{i=0}^{\lfloor u/v\rfloor}(-1)^i\binom ku
      (u-vi)^{k-1}x^i. \label{eq:cgf} \eeq
      
A fundamental property of Eulerian polynomials is the identity
(see \cite[Proposition 1.4.4]{St0})
  $$ \sum_{n\geq 0} n^{k-1}x^n =\frac{A_{k-1}(x)}{(1-x)^k}. $$
  
Hence,
  \beq A_{k-1}(x)(1+x+\cdots+x^{v-1})^k = (1-x^v)^k 
     \sum_{n\geq 0} n^{k-1}x^n. \label{eq:akv} \eeq
     
It is now routine to compute the coefficient of $x^m$ on the right-hand
sides of equations~\eqref{eq:cgf} and \eqref{eq:akv} and see that they
agree term by term.
\end{proof}

Note that if $j\in\PP$ and we take the coefficient of $x^{jv}$ on both
sides of equation~\eqref{eq:cucgf}, then we obtain the identity
  $$ v^{k-1}A(k-1,j)=[x^{vj}](1+x+x^2+\cdots+x^{v-1})^kA_{k-1}(x). $$
It is not difficult to give a direct proof of this identity.


Let us now turn to the \emph{difference} between two consecutive
coefficients of $\binom{a+k}{k}_q$, i.e., the function $f_{k,c}(a)$ of
equation~\eqref{eq:fkc}. We consider here only the
coefficients near the middle (i.e., $q^{aj}$) when $k=2j$, though
undoubtedly our results can be extended to other coefficients. Note
that, by the previous theorem, we have 
  $$ [q^{aj-c}]\binom{a+k}{k}_q\sim
  [q^{aj-c-1}]\binom{a+k}{k}_q\sim \frac{1}{(k-1)!\,k!}C(\alpha,
  k)a^{k-1},\ \ a\to\infty. $$ 
  
Thus we might expect that the difference
$([q^{aj-c}]-[q^{aj-c-1}])\binom{a+k}{k}_q$ grows like
$a^{k-2}$. However, the next result shows that the correct
growth rate is $a^{k-3}$. 

\begin{theorem} \label{thm:dk}
Let $c\in\N$ and $k=2j$, where
$j\in\mathbb{P}$. Then for $j\geq 3$ we have
 $$ \left([q^{aj-c}]-[q^{aj-c-1}]\right)\binom{a+k}{k}_q =
     \frac{2c+1}{(k-3)!\,k!}D(k)a^{k-3}+O(a^{k-4}), 
 $$
where 
  $$ D(k)= \frac 12\sum_{i=0}^{j-1}(-1)^{i+1}\binom ki 
    (j-i)^{k-3}. $$ 
\end{theorem}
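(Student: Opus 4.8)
The plan is to follow the same dissection-and-pole-extraction strategy used in the proof of Theorem~\ref{thm:calk}, but now carried out to one higher order of precision so that the leading contributions in the difference $([q^{aj-c}]-[q^{aj-c-1}])\binom{a+k}{k}_q$ cancel and the genuine $a^{k-3}$ term emerges. Concretely, fix the parity class $a\equiv r\pmod 2$ (so $m=\lfloor ak/2\rfloor=aj$ exactly for both residues, since $k=2j$), write $a=2b+r$, and expand
\[
(1-q^{a+1})\cdots(1-q^{a+k})=\sum_{i=0}^{k}(-1)^iP_i(q)q^{ai},
\qquad P_i(1)=\binom ki,\quad P_i'(1)=\binom ki\!\sum_{s}s\ \text{(over the relevant }S),
\]
and apply Lemma~\ref{lemma:dis}(a) to each term of $\sum_{a\equiv r}\bigl([q^{aj-c}]-[q^{aj-c-1}]\bigr)\binom{a+k}{k}_q x^a$. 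As in Theorem~\ref{thm:calk}, every root of unity $\zeta\neq 1$ contributes a pole at $x=1$ of order $<k$, and the $\zeta=1$ term contributes a pole of order at most $k$; the point is that the order-$k$ and order-$(k-1)$ parts of this pole are the \emph{same} for $[q^{aj-c}]$ and for $[q^{aj-c-1}]$, so they drop out of the difference, and one must compute the order-$(k-2)$ coefficient of the Laurent expansion at $x=1$.

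The key computation is therefore the following: for the $\zeta=1$ term one is looking at $[q^{(u-vi)b}]$ type expressions with $u/v=j$ exactly, i.e. at
\[
\frac{1}{[k]!}\sum_{i=0}^{j-1}(-1)^iP_i(q)\bigl(q^{c_1-d}-q^{c_2-d}\bigr)
\]
(where $c_1=c$, $c_2=c+1$, $d=\lfloor (2j+? )r/2\rfloor$ is a bounded shift), evaluated near $q=1$ after the substitution coming from the dissection. Expanding $1/[k]!=1/\prod(1-q^t)$ and each $P_i(q)$ to second order in $(1-q)$, and using $q^{c-d}-q^{c+1-d}=(1-q)q^{c-d}$, one sees a factor $(1-q)$ pull out — this is what lowers the pole order from $k$ to $k-1$ in the difference — and then the leading surviving term is governed by the first Taylor coefficient of the remaining factor. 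Tracking the $(2c+1)$ comes precisely from expanding $q^{c-d}=1-(c-d)(1-q)+\cdots$ combined with the two shifts (the asymmetry $c$ versus $c+1$ produces the arithmetic mean weight), while the combinatorial sum $\sum_{i=0}^{j-1}(-1)^{i+1}\binom ki(j-i)^{k-3}$ arises exactly as the analogue of the sum $\sum(-1)^i\binom ki(\alpha-i)^{k-1}$ in Theorem~\ref{thm:calk}, now with exponent $k-3$ because two additional powers of $(1-x)$ have been absorbed (one from the $(1-q)$ factor above, one because the order of the pole dropped). Summing over the two residues $r\in\{0,1\}$ and extracting $[x^a]$ from $c_0/(1-x)^{k-2}+O((1-x)^{-(k-3)})$, using $[x^a](1-x)^{-(k-2)}=a^{k-3}/(k-3)!+O(a^{k-4})$, gives the stated formula with the constant $D(k)/\bigl((k-3)!\,k!\bigr)$; the factor $1/k!$ is the residue normalization $\lim_{x\to1}(1-x^s)^{k}/\prod_{t=1}^k(1-x^t)$ carried over from the proof of Theorem~\ref{thm:calk}, and the explicit $\tfrac12$ in $D(k)$ reflects the second-order Taylor coefficient bookkeeping.

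The main obstacle I anticipate is \emph{the bookkeeping of the subleading Taylor coefficients} — showing cleanly that, after the cancellation of the two highest-order poles, the coefficient $c_0$ of $(1-x)^{-(k-2)}$ really is $\tfrac{2c+1}{2\,k!}\sum_{i=0}^{j-1}(-1)^{i+1}\binom ki(j-i)^{k-3}$ and not something contaminated by $P_i'(1)$ or by the $r$-dependent shift $d$. The shifts $d$ and the $P_i'(1)$ corrections do contribute to the order-$(k-1)$ part of the pole, but one must verify they contribute \emph{equally} to $[q^{aj-c}]$ and $[q^{aj-c-1}]$ (so they cancel in the difference) and that what survives at order $k-2$ is $r$-independent up to the expected averaging; this is where one needs the symmetry and unimodality of the coefficients of $\binom{a+k}{k}_q$ — symmetry forces certain odd-order terms in the Laurent expansion to vanish, which is precisely why the naive guess of $a^{k-2}$ growth fails and the answer is pinned down to $a^{k-3}$. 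A secondary, purely technical point is confirming that no $\zeta\neq1$ term can sneak a pole of order $k-1$ or $k-2$ into the difference; this follows exactly as in Theorem~\ref{thm:calk} from the fact that $(1-\zeta^tx^t)$ has a simple zero at $x=1$ for at most a bounded number of $t$, but it should be stated explicitly. The restriction $j\geq3$ (i.e. $k\geq6$) is needed simply so that $k-3\geq0$ and the asymptotic statement is non-vacuous; the small cases $k=2,4$ are already visible in the generating functions $F_2,F_4$ computed in the Example.
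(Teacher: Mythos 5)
Your overall strategy is the same as the paper's: dissect, expand the Laurent series at $x=1$ to one order beyond what Theorem~\ref{thm:calk} needed, and show that the top-order poles cancel in the difference. But there is a genuine gap at exactly the step that the paper itself flags as the whole point of the theorem. You assert that ``the order-$k$ and order-$(k-1)$ parts of this pole are the same for $[q^{aj-c}]$ and for $[q^{aj-c-1}]$, so they drop out of the difference,'' and later justify the drop from $a^{k-2}$ to $a^{k-3}$ by appealing to ``the symmetry and unimodality of the coefficients of $\binom{a+k}{k}_q$.'' That appeal does not work: symmetry only gives $g_{k,c}(a)=g_{k,-c}(a)$, i.e.\ that the subleading coefficient is an \emph{even} function of $c$, which is perfectly compatible with a nonzero $a^{k-2}$ term in the difference (e.g.\ a coefficient proportional to $c^2$ would survive). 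The actual reason the order-$(k-1)$ pole of the difference vanishes is an algebraic identity: after the dissection one finds that its coefficient is proportional to $\sum_{i=0}^{j-1}(-1)^i\binom ki(j-i)^{k-2}$, the summand is invariant under $i\mapsto k-i$ (and vanishes at $i=j$), so the half-range sum equals half of $\sum_{i=0}^{k}(-1)^i\binom ki(j-i)^{k-2}$, which is the $k$th finite difference of a polynomial of degree $k-2$ and hence zero. Without this identity your argument only yields $O(a^{k-2})$, not the stated $a^{k-3}$ asymptotics. Relatedly, the extraction of the exact constant is only heuristic in your write-up: one must carry the first-derivative data $P_i'(1)=\binom{k+1}{2}\binom{k-1}{i-1}$ through the order-$(k-2)$ Laurent coefficient and then verify that the resulting numerator (linear in $t$) vanishes at $t=-1$, so that it factors as a multiple of $(1+t)$ and $\frac{1+t}{(1-t)^2}=\sum_{c\ge 0}(2c+1)t^c$ produces the $(2c+1)$; your ``arithmetic mean weight'' remark is not a substitute for this computation.

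A secondary but real error: the hypothesis $j\ge 3$ is not there merely ``so that $k-3\ge 0$.'' It is needed so that every root of unity $\zeta\neq 1$ contributes a pole of order at most $\lfloor k/2\rfloor=j\le k-3$, which fails for $j=2$: there $x=-1$ contributes a pole of order $2=k-2$, and indeed the theorem is false for $k=4$, where the answer acquires the extra oscillating term $3\cdot(-1)^c$ (Remark (b) of the paper). Finally, your splitting into parity classes $a\equiv r\pmod 2$ is unnecessary here since $k=2j$ is even and $m=aj$ exactly; the paper instead packages all $c$ at once by replacing the factor $1-q$ in the denominator by $1-qt$, which is what makes the $t=-1$ and $t=1$ evaluations, and hence the $(2c+1)$, come out cleanly.
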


\begin{proof}
Write 
  \beas F_k(x,t) & = & \sum_{a\geq 0}\sum_{c\geq
    0}([q^{aj-c}]-[q^{aj-c-1}]) \binom{a+k}{k}_q x^at^c\\ & = & 
    \sum_{a\geq 0}[q^{aj}]\frac{(1-q^{a+k})\cdots (1-q^{a+1})}
        {(1-q^k)\cdots (1-q^2)(1-qt)}x^a. \eeas
        
When $k\geq 6$, the order to which a primitive root of unity
$x\neq 1$ is a pole of $F_k(x,t)$ is at most $k-3$. Thus we
need to show that the pole at $x=1$ contributes the stated
result. 

Let 
  $$ F_k(x,t) = \alpha_k(t)\frac{1}{(1-x)^{k-1}} + \beta_k(t)
     \frac{1}{(1-x)^{k-2}} +O\left(\frac{1}{(1-x)^{k-3}}\right). $$
First we show that $\alpha_k(t)=0$. Reasoning as in the proof of
Theorem~\ref{thm:calk} gives 
  $$ \alpha_k(t) = \frac{1}{(k-2)!\,k!(1-t)}
      \sum_{i=0}^{j-1} (-1)^i\binom ki (j-i)^{k-2}. $$
      
Since $k$ is even, the summand $(-1)^i\binom ki(j-i)^{k-2}$ remains the
same when we substitute $k-i$ for $i$. Moreover, when $i=j$ the
summand is 0. Hence
  $$ \alpha_k(t) = \frac{1}{2(k-2)!\,k!(1-t)}
      \sum_{i=0}^k (-1)^i\binom ki (j-i)^{k-2}. $$
      
This sum is the $k$th difference at 0 of a polynomial of degree $k-2$,
and is therefore equal to 0 (see \cite[Proposition 1.9.2]{St0}), as desired.

We now need to find the coefficient $\beta$ of $(1-x)^{k-2}$ in the 
Laurent expansion at $x=1$ of linear combinations of rational
functions of the type
  $$ H=\frac{P(x)}{(1-x^2)\cdots (1-x^k)(1-xt)} = 
   \frac{\alpha}{(1-x)^{k-1}} + 
   \frac{\beta}{(1-x)^{k-2}} + \cdots, $$
where $P(x)$ is a polynomial in $x$. Write
$(i)_x=1+x+x^2+\cdots+x^{i-1}$. It is easy to see that
$\alpha=P(1)/k!(1-t)$. Thus  
  \beas \beta & = & \lim_{x\to 1}(1-x)^{k-2}
     \left( \frac{P(x)}{(1-x^2)\cdots (1-x^k)(1-xt)} -
   \frac{P(1)}{k!(1-t)(1-x)^{k-1}}\right)\\ 
   & = &
    \lim_{x\to 1}\frac{1}{1-x}\cdot \frac{P(x)k!(1-t) -
    P(1)(2)_x\cdots(k)_x(1-xt)}{(2)_x\cdots(k)_x(1-xt)k!(1-t)}\\ 
   & = &
   -\frac{1}{k!^2(1-t)^2}\frac{d}{dx}\left.\left(P(x)k!(1-t)-
     P(1)(2)_x\cdots(k)_x(1-xt)\right)\right|_{x=1}\\ 
   & = &
     -\frac{1}{k!^2(1-t)^2}\left( P'(1)k!(1-t)-k!P(1)\left(\frac 12
     +\frac 33+\cdots+\frac{\binom k2}{k}\right)(1-t)+P(1)t\right)\\
   & = & -\frac{1}{k!(1-t)^2}\left( P'(1)(1-t)-\frac 12P(1)\binom
     k2(1-t) +P(1)t\right)\\ 
   & = & 
    \frac{1}{k!(1-t)^2}\left( -P'(1)(1-t)
     +\frac 14P(1)(-k+kt+k^2-k^2t)-P(1)t\right). \eeas
     
Let us apply this result to $P(x)=P_i(x)$, where $P_i$ is defined by
equation~\eqref{eq:piq2}. Clearly $P_i(1)=\binom ki$, while 
   $$ P'_i(1) = \sum_{\substack{S\subseteq [k]\\ \#S=i}}
     \sum_{s\in S} s. $$
     
The element $i\in [k]$ appears in $\binom{k-1}{i-1}$ $i$-element
subsets of $[k]$. Hence
  $$ P'_i(1) = \sum_{i=1}^k i\binom{k-1}{i-1} = 
    \binom{k+1}{2} \binom{k-1}{i-1}, $$
where when $i=0$ we set $\binom{k-1}{-1}=0$. Arguing as in the proof
of Theorem~\ref{thm:calk} now gives 
 $$ \!\!\!\!\!\!\!\!\!\!\!\!\!\!\!\!\!\!\! 
   \beta_k(t) = \frac{1}{k!(1-t)^2}\sum_{i=0}^{j-1} 
    (-1)^{i+1}(j-i)^{k-3} $$
\vspace{-1.2em}
 $$ \times 
    \left(\binom ki\left(\frac 12(1-t)(k-1)(j-i-1)+t-
      \frac 14(1-t)k(k-1)\right)\right. $$ 
\vspace{-1.2em}
   \beq +\left.(1-t)\binom{k+1}{2}
       \binom{k-1}{i-1}\right). \label{eq:fkxt} \eeq
       
If we set $t=-1$ on the right-hand-side of equation~\eqref{eq:fkxt}, then a straightforward computation shows that the sum is 0. If 
we set $t=1$, then another computation gives 
   $$ \sum_{i=0}^{j-1}(j-i)^{k-3}(-1)^{i+1}\binom ki. $$ 
   
Since
  $$ \frac{1+t}{(1-t)^2} = \sum_{c\geq 0}(2c+1)t^c, $$ 
the proof now follows. 
\end{proof}

\begin{remark}
\begin{enumerate}
\item[(a)] It follows from work of Verma \cite{verma} and of
Hering and Howard \cite{h-h} that $D(k)$ also satisfies
  \beq K_{a(k/2,k/2),a\cdot 1^k} = \frac{1}{(k-3)!}D(k)a^{k-3} +
   O(a^{k-4}), \label{eq:kostka} \eeq
where $K_{\lambda\mu}$ is a Kostka number and $a\cdot 1^k$ denotes the
partition of $ak$ with $k$ $a$'s. Is the appearance of $D(k)$ in both
Theorem~\ref{thm:dk} and equation~\eqref{eq:kostka} just a
  coincidence? 

  

\item[(b)] Theorem~\ref{thm:dk} is false for $j=2$. Indeed, it follows from
equation~\eqref{eq:f4xt} that
  $$ F_4(x,t) = \frac{1-t+t^2}{6(1-t)(1-t^2)}\cdot \frac{1}{(1-x)^2} 
     +O\left(\frac{1}{1-x}\right) $$
and 
  $$ \left([q^{2a-c}]-[q^{2a-c-1}]\right)\binom{a+4}{4}_q =
   \frac{1}{24}(2c+1+3\cdot(-1)^c)a +O(1),\ \ a\to\infty. $$

\item[(c)] An obvious problem arising from our work is the extension
  of Theorem~\ref{thm:calk} to additional terms. Can such a
  computation be automated?
\end{enumerate}
\end{remark}

\section{Acknowledgements} We are grateful to several anonymous
reviewers for several comments, to Fu Liu for her assistance with the
proof presented after Theorem \ref{thm:calk}, and to Qinghu Hou for
pointing out some errors in our original computations and for noting
that the Omega Package \cite{omega} of Andrews, Paule, and Riese can
be used very effectively for the computation of $F_k(x,t)$ and
$G_k(x,t)$. The second author warmly thanks the first author for his
hospitality during calendar year 2013 and the MIT Math Department for
partial financial support.



\begin{thebibliography}{ll}


\bibitem{oeis} A060187, \emph{On-Line Encyclopedia of Integer
  Sequences}. Available at   \href{http://oeis.org/A060187}{http://oeis.org/A060187}.
  



\bibitem{omega} G. E. Andrews, P. Paule and A. Riese: \emph{MacMahon's
  partition analysis: the Omega Package},  European
  J. Combin.\ \textbf{22} (2001), 887--904. 

\bibitem{Dh} V. Dhand: \emph{A combinatorial proof of strict unimodality for $q$-binomial coefficients}, Discrete Math. \textbf{335} (2014), 20--24.






\bibitem{h-h} M. Hering and B. Howard: \emph{The ring of evenly weighted
  points on the line}, Math. Z. \textbf{277} (2014), no. 3--4, 691--708. 


\bibitem{igusa} J.-I. Igusa: \emph{Modular forms and projective invariants},
  Amer.\ J. Math.\ \textbf{89} (1967), 817--855.

\bibitem{janson} S. Janson: \emph{Euler-Frobenius numbers and rounding}, Online J. Analytic Comb. \textbf{8} (2013),  34 pp..









\bibitem{PP} I. Pak and G. Panova: \emph{Strict unimodality of q-binomial coefficients}, C. R. Math. Acad. Sci. Paris \textbf{351} (2013), no. 11--12, 415--418.

\bibitem{PP2} I. Pak and G. Panova: \emph{Bounds on the Kronecker coefficients}, preprint. Available on the \href{http://arxiv.org/abs/1406.2988}{arXiv}.

\bibitem{St0} R. Stanley: ``Enumerative Combinatorics'', Vol. I,   Second Ed., Cambridge University Press, Cambridge, U.K. (2012). 

\bibitem{SZ} R. Stanley and F. Zanello: \emph{Unimodality of partitions with distinct parts inside Ferrers shapes}, European J. Combin. \textbf{49} (2015), 194--202.



\bibitem{takacs} L. Tak\'acs: \emph{Some asymptotic formulas for lattice
  paths}, J. Stat.\ Planning and Inference \textbf{14} (1986), 123--142. 

\bibitem{verma} D.-N. Verma: \emph{Toward classifying finite point-set
  configurations}, preprint (1997).

\bibitem{Za} F. Zanello: \emph{Zeilberger's KOH theorem and the strict unimodality of $q$-binomial coefficients}, Proc. Amer. Math. Soc. \textbf{143} (2015), no. 7, 2795--2799.






\end{thebibliography}
\end{document}